\newcommand{\cirrad}{0.06}
\newcommand{\labelrad}{0.1}
\definecolor{mgreen}{RGB}{0,150,0}
\definecolor{mblue}{RGB}{65,105,225}
\newtheorem{thm}[subsection]{Theorem}
\newtheorem{prop}[subsection]{Proposition}
\newtheorem{lemma}[subsection]{Lemma}
\theoremstyle{definition}  
\newtheorem{ex}[subsection]{Example}
\newcommand{\map}{\rightarrow}
\newcommand{\field}[1]  {\mathbb #1} 
\newcommand{\F}         {\field F}
\newcommand{\R}         {\field R}
\newcommand{\C}         {\field C}
\DeclareMathOperator{\Ext}{Ext}
\newcommand{\nc}[2]{\frac{\gamma}{\rho^{#1} \tau^{#2}}}
\newcommand{\Qc}[2]{\frac{Q}{\rho^{#1} \tau^{#2}}}
\numberwithin{equation}{section} 
\begin{document}

\title{$C_2$-equivariant and $\R$-motivic stable stems, II}

\author{Eva Belmont} 
\address{Department of Mathematics \\Northwestern University \\Evanston, IL
60208}
\email{ebelmont@northwestern.edu}
\author{Bertrand J. Guillou}
\address{Department of Mathematics\\ University of Kentucky\\
Lexington, KY 40506, USA}
\email{bertguillou@uky.edu}
\author{Daniel C.\ Isaksen}
\address{Department of Mathematics \\ Wayne State University\\
Detroit, MI 48202}
\email{isaksen@wayne.edu}
\thanks{The second author was supported by NSF grant DMS-1710379.
The third author was supported by NSF grant DMS-1202213.}

\subjclass[2000]{14F42, 55Q45, 55Q91, 55T15}

\keywords{stable homotopy group, equivariant stable homotopy theory,
motivic stable homotopy theory, Adams spectral sequence}

\begin{abstract}
We show that the $C_2$-equivariant and $\R$-motivic stable
homotopy groups are isomorphic in a range.  This result supersedes
previous work of Dugger and the third author.
\end{abstract}

\maketitle

\section{Introduction}

This article is part of an on-going project to make
explicit computations of stable homotopy groups in 
the $\C$-motivic, $\R$-motivic, $C_2$-equivariant, and classical
stable homotopy theories, as depicted in the diagram
\begin{equation}
\label{eq:comparison}
\xymatrix{
\R\textrm{-motivic} \ar[d]_{\textrm{realization}} \ar[rrr]^{\textrm{extension of scalars}} &  & &
\C\textrm{-motivic} \ar[d]^{\textrm{realization}} \\
 C_2\textrm{-equivariant} \ar[rrr]_{\textrm{forgetful}} & & & 
 \textrm{classical.}
}
\end{equation}
The horizontal arrows labelled ``realization" refer to
the Betti realization functors that 
take a variety over $\C$ (resp., over $\R$)
to the space (resp., $C_2$-equivariant space) of $\C$-valued points.
The vertical arrow labelled ``extension of scalars" refers
to the functor that takes a variety over $\R$ and views it as a
variety over $\C$.
The vertical arrow labelled ``forgetful" refers to the functor
that takes a $C_2$-equivariant object to its underlying non-equivariant
object.

The goal of this article is to study the top horizontal arrow
in Diagram (\ref{eq:comparison}).
We show that there is an isomorphism 
\[
\pi^{\R}_{*,*} \map \pi^{C_2}_{*,*}
\]
in a range of degrees.
Here $\pi^\R_{*,*}$ are the $\R$-motivic stable homotopy groups 
completed at $2$ and $\eta$, and
$\pi^{C_2}_{*,*}$ are the corresponding $C_2$-equivariant 
stable homotopy groups.
For the purposes of this paper,
$\pi^\R_{*,*}$ and $\pi^{C_2}_{*,*}$ can be defined as the targets
of the $\R$-motivic and $C_2$-equivariant Adams spectral sequences; in
particular, these are 2-complete homotopy groups.
The map is induced by equivariant Betti realization
that takes a variety over $\R$ to the space of $\C$-valued
points, equipped with the conjugation action \cite{MV99}*{Section 3.3}, \cite{HO16}*{Section 4.4}. 
In practice, information typically flows from source to target along
the isomorphism.
Even though 
$\pi^\R_{*,*}$ is highly non-trivial \cite{BI19} \cite{DI17},
it is somewhat easier to compute than $\pi^{C_2}_{*,*}$.

See the introduction of \cite{DI17b} for a more thorough discussion of the
objects and categories under consideration.  We assume that the reader
is familiar with the motivic and $C_2$-equivariant Adams spectral sequences.
Relevant details appear in \cite{BI19} \cite{DI10} \cite{GHIR20}
 \cite{Isaksen14c}.

Our work is a natural sequel to the article \cite{DI17b}, which 
establishes an isomorphism between
$\R$-motivic and $C_2$-equivariant stable homotopy groups in a strictly
smaller range.
The method of \cite{DI17b} is to compare cobar complexes, which then
yields a comparison of Adams $E_2$-pages.  In turn, this leads to a 
comparison of stable homotopy groups.  
While the cobar complex has good formal properties, it is a wasteful
construction in the sense that it is much larger than needed to
compute Adams $E_2$-pages.  The approach of this article, in effect,
ignores large parts of the cobar complexes that do not contribute to
Adams $E_2$-pages.

More specifically, we will compare the 
$\R$-motivic and $C_2$-equivariant
$\rho$-Bockstein spectral sequences \cite{BI19} \cite{DI17} \cite{GHIR20}
that converge to the $\R$-motivic and $C_2$-equivariant Adams
$E_2$-pages respectively.  
We will show that these $\rho$-Bockstein spectral sequences are
isomorphic in a range.  As in \cite{DI17b}, this implies
that the Adams $E_2$-pages are isomorphic in a range, which further implies that the 
stable homotopy groups are isomorphic in a range as well.

\begin{thm}
\label{thm:pi-iso}
The Betti realization map
\[
\pi^\R_{s,w} \map \pi^{C_2}_{s,w}
\]
\begin{enumerate}
\item
is an isomorphism if $2w-s < 5$ and $(s,w) \neq (0,2)$.
\item
is an injection if $2w-s = 5$.
\end{enumerate}
\end{thm}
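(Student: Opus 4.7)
The plan is to follow the strategy laid out in the introduction: first compare the two $\rho$-Bockstein spectral sequences, then transport the comparison to the Adams $E_2$-pages, and finally to the homotopy groups. The organizing bookkeeping quantity throughout is the coweight $c = 2w - s$.

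I would begin by identifying the two $E_1$-pages. Each is built from the $\C$-motivic Adams $E_2$-page with $\rho$ adjoined, but the $C_2$-equivariant version carries an additional ``negative cone'' summand spanned by classes of the form $\nc{i}{j}$ and $\Qc{i}{j}$. The positive-cone portions of the two $E_1$-pages admit an abstract identification, so the entire comparison reduces to controlling the negative cone. The key coweight computation is that $|\rho^{-1}| = (1,1)$ has coweight $+1$ and $|\tau^{-1}| = (0,1)$ has coweight $+2$, so each additional factor of $\rho^{-1}$ or $\tau^{-1}$ strictly raises the coweight of a negative cone element. The goal then becomes to enumerate the negative cone and show that all such classes have coweight strictly greater than $5$, with a single exception accounting for the bidegree $(0,2)$ in the theorem. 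Since $\rho$-Bockstein differentials preserve $(s,w)$ and therefore coweight, this produces an isomorphism of $\rho$-Bockstein $E_\infty$-pages in coweights $\le 5$ off the exceptional bidegree, and hence an isomorphism of Adams $E_2$-pages in the same range.

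To pass from $E_2$-pages to homotopy groups, I would exploit the fact that an Adams differential $d_r\colon E_r^{s,f,w} \to E_r^{s-1,f+r,w}$ raises coweight by exactly one. Thus an iso on $E_2$ in coweights $\le 5$ yields an iso on $E_\infty$ in coweights $< 5$, because in that range both the outgoing differentials (into coweight $\le 5$) and the incoming differentials (from coweight $< 5$) are controlled. In coweight $5$, the incoming differentials from coweight $4$ are controlled but outgoing differentials into coweight $6$ are not, which permits an injection but possibly not a surjection. A standard filtration-by-filtration argument then lifts this to the corresponding statements on homotopy groups; hidden extensions and multiplicative structure preserve coweight and cause no further issue.

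The main obstacle will be the negative cone census: producing an explicit lower bound on the coweight of every negative cone generator, verifying that the single bidegree $(0,2)$ is the unique exception, and confirming that no $\rho$-Bockstein differential between a positive cone class in coweight $\le 5$ and a negative cone class can violate the comparison. This step requires detailed knowledge of the $C_2$-equivariant $\rho$-Bockstein differentials on negative cone generators as developed in \cite{GHIR20}.
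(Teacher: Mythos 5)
Your overall strategy is the same as the paper's (reduce to the negative cone via the splitting $\Ext_{C_2} \cong \Ext_\R \oplus \Ext_{NC}$, bound coweights there, then propagate through the Adams spectral sequence), but the central estimate is neither proved nor even correctly stated. You propose to show that every negative-cone class has coweight strictly greater than $5$ except at $(s,w)=(0,2)$; this is false, as there are nonzero negative-cone classes in coweight exactly $5$ --- for instance $\frac{\gamma}{\tau}h_3$ and the family $\frac{\gamma}{\tau}P^k h_1$ of Example \ref{ex:sharp} --- and this is precisely why part (2) of the theorem is only an injection (the failure of surjectivity in coweight $5$ is already present on the $E_2$-page, not caused by uncontrolled outgoing Adams differentials into coweight $6$ as you assert). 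Moreover, your ``key coweight computation'' (each factor of $\rho^{-1}$ or $\tau^{-1}$ raises coweight) is not sufficient: a negative-cone generator $\nc{a}{b}x$ or $\Qc{a}{b}x$ involves a $\C$-motivic class $x$, and without a lower bound on the coweight of $x$ itself the estimate proves nothing. The real input is a $\C$-motivic statement (Proposition \ref{prop:Ext_C-tau}): a class not divisible by $\tau$ satisfies $2w-s\ge 1$ unless it is $h_0^f$, and a $\tau$-torsion class satisfies $2w-s\ge 4$; this is obtained from the vanishing of $\Ext_\C(S/\tau)$ in low coweight via the algebraic Novikov spectral sequence, not from ``detailed knowledge of the $\rho$-Bockstein differentials in \cite{GHIR20}'' as you suggest. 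One also needs one specific family of Bockstein $d_1$'s (Lemma \ref{lem:perm-cycles}) to dispose of the towers $\nc{a}{b}h_0^f$, leaving only the exceptional classes $\frac{\gamma}{\tau}h_0^f$ at $(s,w)=(0,2)$.

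There is a second gap in the passage to $E_\infty$. You claim that for coweight-$5$ positions ``the incoming differentials from coweight $4$ are controlled,'' but they are not at the exceptional bidegree: the extra $C_2$-classes $\frac{\gamma}{\tau}h_0^f$ in $(s,w)=(0,2)$ (coweight $4$) are not in the image of $\Ext_\R$, and if any of them supported an Adams differential into $(s,w)=(-1,2)$ (coweight $5$), injectivity there could fail on later pages. The paper closes this with Lemma \ref{lem:Adams-perm-cycles}: these classes are permanent cycles because the target groups in stem $-1$, weight $2$ vanish. Without that observation (or some substitute), your induction through the Adams pages does not go through in the coweight-$5$ case, and hence part (2) --- and, via the diagram chase, the clean statement of part (1) near that bidegree --- is not established.
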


\begin{proof}
Proposition \ref{prop:Adams-iso} shows that
the $\R$-motivic and $C_2$-equivariant Adams $E_\infty$-pages
are isomorphic when $2w-s< 5$ and $s \neq 0$.
In other words, $\pi^\R_{s,w}$ and $\pi^{C_2}_{s,w}$
have isomorphic associated graded objects, so they are isomorphic.
This establishes part (1).

The proof of part (2) is essentially the same.
\end{proof}

Theorem \ref{thm:pi-iso} is stated in terms of the stem $s$ and the
weight $w$.  In some situations, it is more convenient to work with the
stem $s$ and the coweight $s-w$.  In those terms, Theorem \ref{thm:pi-iso}
says that Betti realization:
\begin{enumerate}
\item
is an isomorphism if $s < 2(s-w) + 5$ and $(s,s-w) \neq (0,-2)$.
\item
is an injection if $s = 2(s-w) + 5$.
\end{enumerate}

In order to further illustrate Theorem \ref{thm:pi-iso}, 
Figures \ref{fig:Adams-0}--\ref{fig:Adams-3} show $C_2$-equivariant Adams charts in coweights
$0$ through $3$.  These charts show the range of stems in which
$\R$-motivic and $\C_2$-equivariant stable homotopy groups are 
isomorphic.  The elements in green on the far right of each chart
are the first $C_2$-equivariant classes that have no $\R$-motivic
analogues.  Explanations for the computations in these charts will
appear elsewhere.  Here is a key for reading the charts:
\begin{itemize}
\item
Vertical lines indicate multiplications by $h_0$.
\item
Horizontal lines indicate multiplications by $\rho$.
\item
Lines of slope $1$ indicate multiplications by $h_1$.
\item
Arrows indicate infinite sequences of elements that are related by
multiplications.
\item
Vertical dashed lines indicate hidden $h_0$ extensions.
\item
Dashed lines of negative slope indicate hidden $\rho$ extensions.
\end{itemize}

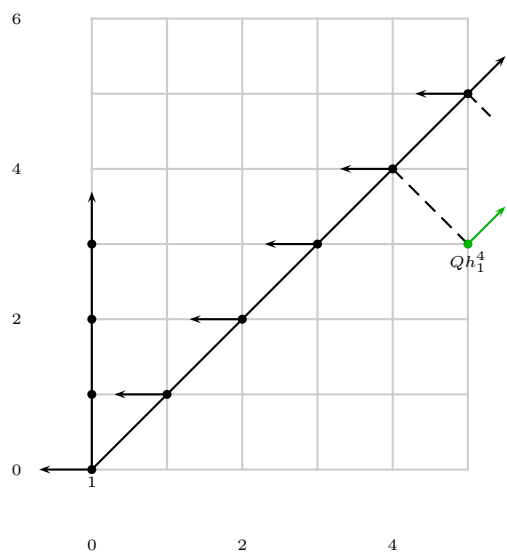
\begin{figure}
\caption{$C_2$-equivariant Adams $E_{\infty}$-page in coweight $0$
\label{fig:Adams-0}}

\begin{pspicture}(-1,-1)(6,7)
\psgrid[unit=1,gridcolor=gridline,subgriddiv=0,gridlabelcolor=white](0,0)(5,6)
\tiny
\rput(0,-1){0}
\rput(2,-1){2}
\rput(4,-1){4}

\rput(-1,0){0}
\rput(-1,2){2}
\rput(-1,4){4}
\rput(-1,6){6}

\psline[linecolor=pcolor,linestyle=dashed](5,3)(4,4)
\pscircle*[linecolor=NCcolor](5,3){\cirrad}
\psline[linecolor=NCcolor]{->}(5,3)(5.50,3.50)
\uput{\labelrad}[-90](5,3){$Q h_1^4$}
\psline[linecolor=pcolor,linestyle=dashed](5.3,4.7)(5,5)

\uput{\labelrad}[-90](0.00,0.00){$1$}
\psline[linecolor=hzerocolor](0.00,0.00)(0.00,1.00)
\psline[linecolor=pcolor]{->}(0.00,0.00)(-0.70,0.00)
\psline[linecolor=honecolor](0.00,0.00)(1.00,1.00)
\pscircle*[linecolor=dotcolor](0.00,0.00){\cirrad}
\psline[linecolor=hzerocolor](0.00,1.00)(0.00,2.00)
\pscircle*[linecolor=dotcolor](0.00,1.00){\cirrad}
\psline[linecolor=hzerocolor](0.00,2.00)(0.00,3.00)
\pscircle*[linecolor=dotcolor](0.00,2.00){\cirrad}
\psline[linecolor=hzerocolor]{->}(0.00,3.00)(0.00,3.70)
\pscircle*[linecolor=dotcolor](0.00,3.00){\cirrad}
\psline[linecolor=pcolor]{->}(1.00,1.00)(0.30,1.00)
\psline[linecolor=honecolor](1.00,1.00)(2.00,2.00)
\pscircle*[linecolor=dotcolor](1.00,1.00){\cirrad}
\psline[linecolor=pcolor]{->}(2.00,2.00)(1.30,2.00)
\psline[linecolor=honecolor](2.00,2.00)(3.00,3.00)
\pscircle*[linecolor=dotcolor](2.00,2.00){\cirrad}
\psline[linecolor=pcolor]{->}(3.00,3.00)(2.30,3.00)
\psline[linecolor=honecolor](3.00,3.00)(4,4)
\pscircle*[linecolor=dotcolor](3.00,3.00){\cirrad}
\psline[linecolor=pcolor]{->}(4.00,4.00)(3.30,4.00)
\psline[linecolor=honecolor](4.00,4.00)(5,5)
\pscircle*[linecolor=dotcolor](4.00,4.00){\cirrad}
\psline[linecolor=pcolor]{->}(5.00,5.00)(4.30,5.00)
\psline[linecolor=honecolor]{->}(5.00,5.00)(5.50,5.50)
\pscircle*[linecolor=dotcolor](5.00,5.00){\cirrad}
\end{pspicture}
\end{figure}

\begin{figure}
\caption{$C_2$-equivariant Adams $E_{\infty}$-page in coweight $1$
\label{fig:Adams-1}}
\begin{pspicture}(-1,-5)(7,5)
\psgrid[unit=1,gridcolor=gridline,subgriddiv=0,gridlabelcolor=white](0,0)(7,4)
\tiny
\rput(0,-1){0}
\rput(2,-1){2}
\rput(4,-1){4}
\rput(6,-1){6}

\rput(-0.5,0){0}
\rput(-0.5,2){2}
\rput(-0.5,4){4}

\uput{\labelrad}[-90](1.00,1.00){$\tau h_1$}
\psline[linecolor=hzerocolor](1.00,1.00)(1.00,2.00)
\psline[linecolor=pcolor](1.00,1.00)(0.00,1.00)
\psline[linecolor=honecolor](1.00,1.00)(2.00,2.00)
\pscircle*[linecolor=dotcolor](1.00,1.00){\cirrad}
\psline[linecolor=honecolor](0.00,1.00)(1.00,2.00)
\pscircle*[linecolor=dotcolor](0.00,1.00){\cirrad}
\psline[linecolor=pcolor](2.00,2.00)(1.00,2.00)
\psline[linecolor=honecolor](2.00,2.00)(3.00,3.00)
\pscircle*[linecolor=dotcolor](2.00,2.00){\cirrad}
\pscircle*[linecolor=dotcolor](1.00,2.00){\cirrad}
\uput{\labelrad}[-90](3.00,1.00){$h_2$}
\psline[linecolor=hzerocolor](3.00,1.00)(3.00,2.00)
\psline[linecolor=pcolor]{->}(3.00,1.00)(2.30,1.00)
\pscircle*[linecolor=dotcolor](3.00,1.00){\cirrad}
\psline[linecolor=hzerocolor](3.00,2.00)(3.00,3.00)
\pscircle*[linecolor=dotcolor](3.00,2.00){\cirrad}
\pscircle*[linecolor=dotcolor](3.00,3.00){\cirrad}

\uput{\labelrad}[-90](7,1){$\frac{\gamma}{\tau}h_3$}
\psline[linecolor=NCcolor](7,1)(7,4)
\psline[linecolor=NCcolor](7,1)(7.30,1.30)
\psline[linecolor=NCcolor](7,4)(7.3,4)
\pscircle*[linecolor=NCcolor](7,1){\cirrad}
\pscircle*[linecolor=NCcolor](7,2){\cirrad}
\pscircle*[linecolor=NCcolor](7,3){\cirrad}
\pscircle*[linecolor=NCcolor](7,4){\cirrad}

\end{pspicture}
\end{figure}

\begin{figure}
\caption{$C_2$-equivariant Adams $E_{\infty}$-page in coweight $2$
\label{fig:Adams-2}}
\begin{pspicture}(-1,-1)(9,6)
\psgrid[unit=1,gridcolor=gridline,subgriddiv=0,gridlabelcolor=white](0,0)(9,5)
\tiny
\rput(0,-1){0}
\rput(2,-1){2}
\rput(4,-1){4}
\rput(6,-1){6}
\rput(8,-1){8}

\rput(-0.5,0){0}
\rput(-0.5,2){2}
\rput(-0.5,4){4}

\uput{\labelrad}[-90](0.00,1.00){$\tau^{2} h_0$}
\psline[linecolor=hzerocolor](0.00,1.00)(0.00,2.00)
\psline[linecolor=honecolor](0.00,1.00)(1.00,2.00)
\pscircle*[linecolor=dotcolor](0.00,1.00){\cirrad}
\psline[linecolor=hzerocolor](0.00,2.00)(0.00,3.00)
\pscircle*[linecolor=dotcolor](0.00,2.00){\cirrad}
\psline[linecolor=hzerocolor]{->}(0.00,3.00)(0.00,3.70)
\pscircle*[linecolor=dotcolor](0.00,3.00){\cirrad}
\uput{\labelrad}[-90](2.00,2.00){$(\tau h_1)^2$}
\psline[linecolor=hzerocolor](2.00,2.00)(2.00,3.00)
\psline[linecolor=pcolor](2.00,2.00)(1.00,2.00)
\psline[linecolor=honecolor](2.00,2.00)(3.00,3.00)
\pscircle*[linecolor=dotcolor](2.00,2.00){\cirrad}
\psline[linecolor=honecolor](1.00,2.00)(2.00,3.00)
\pscircle*[linecolor=dotcolor](1.00,2.00){\cirrad}
\psline[linecolor=pcolor](3.00,3.00)(2.00,3.00)
\pscircle*[linecolor=dotcolor](3.00,3.00){\cirrad}
\pscircle*[linecolor=dotcolor](2.00,3.00){\cirrad}
\uput{\labelrad}[-90](6.00,2.00){$h_2^{2}$}
\psline[linecolor=pcolor]{->}(6.00,2.00)(5.30,2.00)
\pscircle*[linecolor=dotcolor](6.00,2.00){\cirrad}

\uput{\labelrad}[-90](9,5){$\frac{\gamma}{\tau}P h_1$}
\psline[linecolor=NCcolor](9,5)(9.3,5)
\psline[linecolor=NCcolor](9,5)(9.30,5.30)
\pscircle*[linecolor=NCcolor](9,5){\cirrad}
\end{pspicture}
\end{figure}

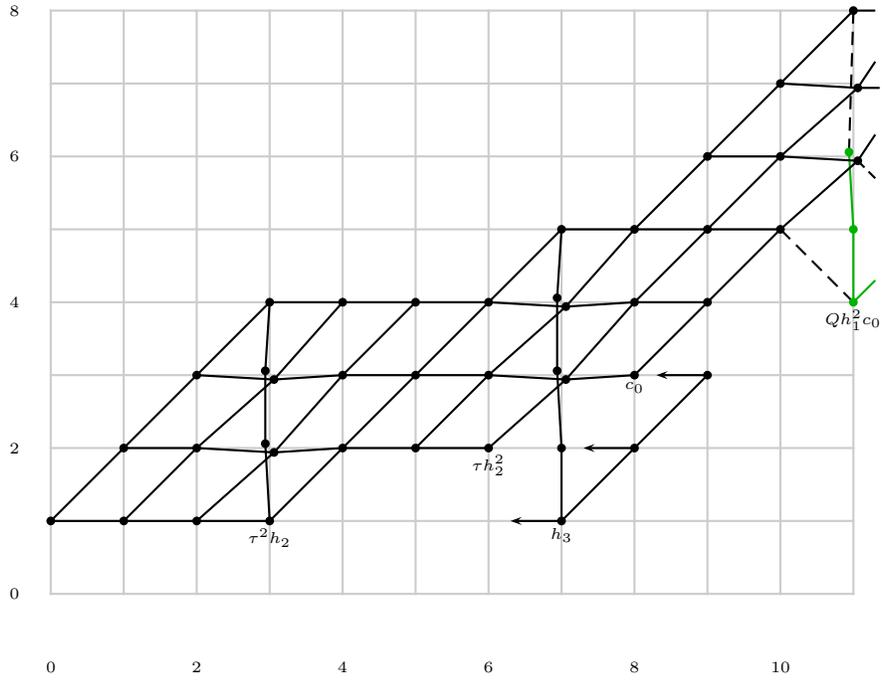
\begin{figure}
\caption{$C_2$-equivariant Adams $E_{\infty}$-page in coweight $3$
\label{fig:Adams-3}}

\psset{unit=0.97cm}
\begin{pspicture}(-1,-1)(12,9)
\psgrid[unit=1,gridcolor=gridline,subgriddiv=0,gridlabelcolor=white](0,0)(11,8)
\tiny
\rput(0,-1){0}
\rput(2,-1){2}
\rput(4,-1){4}
\rput(6,-1){6}
\rput(8,-1){8}
\rput(10,-1){10}

\rput(-0.5,0){0}
\rput(-0.5,2){2}
\rput(-0.5,4){4}
\rput(-0.5,6){6}
\rput(-0.5,8){8}

\uput{\labelrad}[-90](11,4){$Q h_1^2 c_0$}
\psline[linecolor=pcolor,linestyle=dashed](11,4)(10,5)
\psline[linecolor=pcolor,linestyle=dashed](11.06,5.94)(11.3,5.7)
\psline[linecolor=NCcolor](11,4)(11.30,4.30)
\psline[linecolor=NCcolor](11,4)(11,5)
\psline[linecolor=NCcolor](11,5)(10.94,6.06)
\psline[linecolor=hzerocolor,linestyle=dashed](10.94,6.06)(11,8)
\pscircle*[linecolor=NCcolor](11,4){\cirrad}
\pscircle*[linecolor=NCcolor](11,5){\cirrad}
\pscircle*[linecolor=NCcolor](10.94,6.06){\cirrad}

\uput{\labelrad}[-90](3.00,1.00){$\tau^{2} h_2$}
\psline[linecolor=hzerocolor](3.00,1.00)(2.94,2.06)
\psline[linecolor=pcolor](3.00,1.00)(2.00,1.00)
\psline[linecolor=honecolor](3.00,1.00)(4.00,2.00)
\pscircle*[linecolor=dotcolor](3.00,1.00){\cirrad}
\psline[linecolor=pcolor](2.00,1.00)(1.00,1.00)
\psline[linecolor=honecolor](2.00,1.00)(3.06,1.94)
\pscircle*[linecolor=dotcolor](2.00,1.00){\cirrad}
\psline[linecolor=pcolor](1.00,1.00)(0.00,1.00)
\psline[linecolor=honecolor](1.00,1.00)(2.00,2.00)
\pscircle*[linecolor=dotcolor](1.00,1.00){\cirrad}
\psline[linecolor=honecolor](0.00,1.00)(1.00,2.00)
\pscircle*[linecolor=dotcolor](0.00,1.00){\cirrad}
\psline[linecolor=hzerocolor](2.94,2.06)(2.94,3.06)
\pscircle*[linecolor=dotcolor](2.94,2.06){\cirrad}
\psline[linecolor=hzerocolor](2.94,3.06)(3.00,4.00)
\pscircle*[linecolor=dotcolor](2.94,3.06){\cirrad}
\uput{\labelrad}[-90](6.00,2.00){$\tau h_2^{2}$}
\psline[linecolor=pcolor](6.00,2.00)(5.00,2.00)
\psline[linecolor=honecolor](6.00,2.00)(7.06,2.94)
\pscircle*[linecolor=dotcolor](6.00,2.00){\cirrad}
\psline[linecolor=pcolor](5.00,2.00)(4.00,2.00)
\psline[linecolor=honecolor](5.00,2.00)(6.00,3.00)
\pscircle*[linecolor=dotcolor](5.00,2.00){\cirrad}
\psline[linecolor=pcolor](4.00,2.00)(3.06,1.94)
\psline[linecolor=honecolor](4.00,2.00)(5.00,3.00)
\pscircle*[linecolor=dotcolor](4.00,2.00){\cirrad}
\psline[linecolor=pcolor](3.06,1.94)(2.00,2.00)
\psline[linecolor=honecolor](3.06,1.94)(4.00,3.00)
\pscircle*[linecolor=dotcolor](3.06,1.94){\cirrad}
\psline[linecolor=pcolor](2.00,2.00)(1.00,2.00)
\psline[linecolor=honecolor](2.00,2.00)(3.06,2.94)
\pscircle*[linecolor=dotcolor](2.00,2.00){\cirrad}
\psline[linecolor=honecolor](1.00,2.00)(2.00,3.00)
\pscircle*[linecolor=dotcolor](1.00,2.00){\cirrad}
\uput{\labelrad}[-90](7.00,1.00){$h_3$}
\psline[linecolor=hzerocolor](7.00,1.00)(7.00,2.00)
\psline[linecolor=pcolor]{->}(7.00,1.00)(6.30,1.00)
\psline[linecolor=honecolor](7.00,1.00)(8.00,2.00)
\pscircle*[linecolor=dotcolor](7.00,1.00){\cirrad}
\psline[linecolor=hzerocolor](7.00,2.00)(6.94,3.06)
\pscircle*[linecolor=dotcolor](7.00,2.00){\cirrad}
\psline[linecolor=hzerocolor](6.94,3.06)(6.94,4.06)
\pscircle*[linecolor=dotcolor](6.94,3.06){\cirrad}
\psline[linecolor=hzerocolor](6.94,4.06)(7.00,5.00)
\pscircle*[linecolor=dotcolor](6.94,4.06){\cirrad}
\psline[linecolor=pcolor]{->}(8.00,2.00)(7.30,2.00)
\psline[linecolor=honecolor](8.00,2.00)(9.00,3.00)
\pscircle*[linecolor=dotcolor](8.00,2.00){\cirrad}
\uput{\labelrad}[-90](8.00,3.00){$c_0$}
\psline[linecolor=pcolor](8.00,3.00)(7.06,2.94)
\psline[linecolor=honecolor](8.00,3.00)(9.00,4.00)
\pscircle*[linecolor=dotcolor](8.00,3.00){\cirrad}
\psline[linecolor=pcolor](7.06,2.94)(6.00,3.00)
\psline[linecolor=honecolor](7.06,2.94)(8.00,4.00)
\pscircle*[linecolor=dotcolor](7.06,2.94){\cirrad}
\psline[linecolor=pcolor](6.00,3.00)(5.00,3.00)
\psline[linecolor=honecolor](6.00,3.00)(7.06,3.94)
\pscircle*[linecolor=dotcolor](6.00,3.00){\cirrad}
\psline[linecolor=pcolor](5.00,3.00)(4.00,3.00)
\psline[linecolor=honecolor](5.00,3.00)(6.00,4.00)
\pscircle*[linecolor=dotcolor](5.00,3.00){\cirrad}
\psline[linecolor=pcolor](4.00,3.00)(3.06,2.94)
\psline[linecolor=honecolor](4.00,3.00)(5.00,4.00)
\pscircle*[linecolor=dotcolor](4.00,3.00){\cirrad}
\psline[linecolor=pcolor](3.06,2.94)(2.00,3.00)
\psline[linecolor=honecolor](3.06,2.94)(4.00,4.00)
\pscircle*[linecolor=dotcolor](3.06,2.94){\cirrad}
\psline[linecolor=honecolor](2.00,3.00)(3.00,4.00)
\pscircle*[linecolor=dotcolor](2.00,3.00){\cirrad}
\psline[linecolor=pcolor]{->}(9.00,3.00)(8.30,3.00)
\pscircle*[linecolor=dotcolor](9.00,3.00){\cirrad}
\psline[linecolor=pcolor](9.00,4.00)(8.00,4.00)
\psline[linecolor=honecolor](9.00,4.00)(10.00,5.00)
\pscircle*[linecolor=dotcolor](9.00,4.00){\cirrad}
\psline[linecolor=pcolor](8.00,4.00)(7.06,3.94)
\psline[linecolor=honecolor](8.00,4.00)(9.00,5.00)
\pscircle*[linecolor=dotcolor](8.00,4.00){\cirrad}
\psline[linecolor=pcolor](7.06,3.94)(6.00,4.00)
\psline[linecolor=honecolor](7.06,3.94)(8.00,5.00)
\pscircle*[linecolor=dotcolor](7.06,3.94){\cirrad}
\psline[linecolor=pcolor](6.00,4.00)(5.00,4.00)
\psline[linecolor=honecolor](6.00,4.00)(7.00,5.00)
\pscircle*[linecolor=dotcolor](6.00,4.00){\cirrad}
\psline[linecolor=pcolor](5.00,4.00)(4.00,4.00)
\pscircle*[linecolor=dotcolor](5.00,4.00){\cirrad}
\psline[linecolor=pcolor](4.00,4.00)(3.00,4.00)
\pscircle*[linecolor=dotcolor](4.00,4.00){\cirrad}
\pscircle*[linecolor=dotcolor](3.00,4.00){\cirrad}
\psline[linecolor=pcolor](10.00,5.00)(9.00,5.00)
\psline[linecolor=honecolor](10.00,5.00)(11.06,5.94)
\pscircle*[linecolor=dotcolor](10.00,5.00){\cirrad}
\psline[linecolor=pcolor](9.00,5.00)(8.00,5.00)
\psline[linecolor=honecolor](9.00,5.00)(10.00,6.00)
\pscircle*[linecolor=dotcolor](9.00,5.00){\cirrad}
\psline[linecolor=pcolor](8.00,5.00)(7.00,5.00)
\psline[linecolor=honecolor](8.00,5.00)(9.00,6.00)
\pscircle*[linecolor=dotcolor](8.00,5.00){\cirrad}
\pscircle*[linecolor=dotcolor](7.00,5.00){\cirrad}
\psline[linecolor=pcolor](11.06,5.94)(10.00,6.00)
\psline[linecolor=honecolor](11.06,5.94)(11.30,6.30)
\pscircle*[linecolor=dotcolor](11.06,5.94){\cirrad}
\psline[linecolor=pcolor](10.00,6.00)(9.00,6.00)
\psline[linecolor=honecolor](10.00,6.00)(11.06,6.94)
\pscircle*[linecolor=dotcolor](10.00,6.00){\cirrad}
\psline[linecolor=honecolor](9.00,6.00)(10,7)
\pscircle*[linecolor=dotcolor](9.00,6.00){\cirrad}
\psline[linecolor=honecolor](10,7)(11,8)
\psline[linecolor=pcolor](11.06,6.94)(10,7)
\pscircle*[linecolor=dotcolor](10,7){\cirrad}
\psline[linecolor=honecolor](11.06,6.94)(11.30,7.30)
\psline[linecolor=pcolor](11.06,6.94)(11.36,6.94)
\pscircle*[linecolor=dotcolor](11.06,6.94){\cirrad}
\psline[linecolor=pcolor](11.00,8.00)(11.3,8)
\pscircle*[linecolor=dotcolor](11,8){\cirrad}
\end{pspicture}
\end{figure}

Figure \ref{fig:pi-structure} describes some of the global structure
of $\pi^{C_2}_{*,*}$ in graphical form;
see \cite{DI17b}*{Section 1.2} for more discussion.
The groups $\pi^{C_2}_{s,w}$ can be separated into different regions, with
qualitatively different behavior in each region:
\begin{itemize}
\item
zero: The groups in this region are all zero.
\item
$\R$-motivic: The groups in this region are isomorphic to
$\pi^\R_{*,*}$, according to Theorem \ref{thm:pi-iso}.
\item
$\tau$-periodic: The groups in this region display a certain type of
periodicity.  In particular, they can be deduced from groups in the 
$\R$-motivic region.
\item
?: The groups in this region are more complicated, with the occurrence
of purely equivariant phenomena.
\end{itemize}

\begin{figure}
\caption{The structure of $\pi_{s,w}^{C_2}$
\label{fig:pi-structure}}
\psset{unit=0.5cm}
\begin{pspicture}(-4,-4)(10,11)
\psline[linecolor=mygray]{->}(-4,0)(10,0)
\psline[linecolor=mygray](2,-0.2)(2,0.2)
\rput(2,-0.4){\tiny $2$}
\rput(10,-0.4){$s$}
\psline[linecolor=mygray]{->}(0,-4)(0,10)
\psline[linecolor=mygray](-0.2,3)(0.2,3)
\rput(-0.4,3){\tiny $3$}

\rput(-0.4,10){$w$}

\psline{->}(0,0)(-4,-4)
\rput(-2,4){zero}

\psline{->}(0,0)(0,10)
\psline{->}(0,1)(9,10)
\psline{->}(2,3)(10,7)

\rput(2.5,7){$\tau$-periodic}
\rput(6,2){$\R$-motivic}
\rput(6.5,6.3){?}

\end{pspicture}
\end{figure}
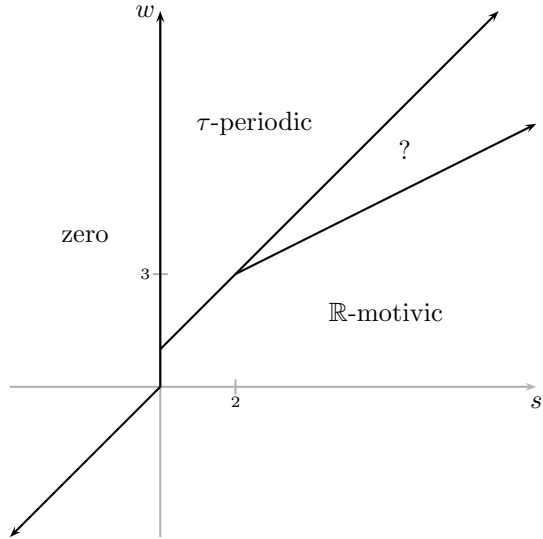

Our result is sharp in the following sense.
In Example \ref{ex:sharp},
we will describe an infinite family of elements in
$\pi^{C_2}_{*,*}$ lying just outside the range under consideration
that are not in the image of Betti realization.

We have at least two motivations for proving Theorem \ref{thm:pi-iso}.
First, this theorem is a self-evidently useful tool in the on-going 
program to carry out explicit computations of motivic and equivariant
stable homotopy groups.
Second, the theorem is used in \cite{BI19} to compute 
some classical Mahowald invariants from detailed information
about $\R$-motivic stable homotopy groups.

\subsection{Notation}
We write $\Ext_\C$ (resp., $\Ext_\R$, $\Ext_{C_2}$)
for the $\C$-motivic (resp., $\R$-motivic, $C_2$-equivariant)
$\Ext$ groups that serve as the $E_2$ page of the
$\C$-motivic (resp., $\R$-motivic, $C_2$-equivariant)
Adams spectral sequence.
We grade these $\Ext$ groups in the form $(s,f,w)$, where
$s$ is the stem (i.e., the total degree minus the homological degree),
$f$ is the Adams filtration (i.e., the homological degree), and
$w$ is the (motivic or equivariant) weight.

\section{The $\C$-motivic cofiber of $\tau$}

We recall some needed facts from $\C$-motivic stable homotopy theory
\cite{Isaksen14c} \cite{IWX20}.

Let $S/\tau$ be the cofiber of $\tau$ in the 2-complete 
$\C$-motivic stable homotopy
category, and let $\Ext_\C(S/\tau)$ be the  $E_2$-page  of the Adams 
spectral sequence that converges
to the homotopy groups of $S/\tau$.
The cofiber sequence
\[
\xymatrix@1{
S^{0,-1} \ar[r]^\tau & S^{0,0} \ar[r]^i & S/\tau \ar[r]^p & S^{1,-1}
}
\]
induces a long exact sequence
\begin{equation}
\label{eq:S/tau}
\xymatrix@1@-2pt{
\dots\ar[r] & \Ext_\C^{s,f,w+1} \ar[r]^\tau & \Ext_\C^{s,f,w} \ar[r]^-i &
\Ext_\C^{s,f,w}(S/\tau) \ar[r]^-p &
\Ext_\C^{s-1,f+1,w+1} \ar[r] & \dots.
}
\end{equation}

\begin{lemma}
\label{lem:S/tau-vanish}
Let $2w - s < 1$.  The group $\Ext_\C^{s,f,w}(S/\tau)$ is:
\begin{enumerate}
\item 
a copy of $\F_2$, generated by $i(h_0^f)$, if $s = 0$ and $w = 0$.
\item
zero otherwise.
\end{enumerate}
\end{lemma}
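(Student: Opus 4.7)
The plan is to derive the result from the long exact sequence \eqref{eq:S/tau}, which yields a short exact sequence
\[
0 \to \mathrm{coker}\bigl(\tau\colon \Ext_\C^{s,f,w+1} \to \Ext_\C^{s,f,w}\bigr) \to \Ext_\C^{s,f,w}(S/\tau) \to \ker\bigl(\tau\colon \Ext_\C^{s-1,f+1,w+1} \to \Ext_\C^{s-1,f+1,w}\bigr) \to 0.
\]
Thus the question reduces to understanding the behavior of multiplication by $\tau$ on $\Ext_\C$ in and around the region $2w - s \leq 0$.

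The input I would invoke is the $\tau$-periodicity of $\Ext_\C$ in the negative-coweight region: multiplication by $\tau\colon \Ext_\C^{s,f,w+1}\to\Ext_\C^{s,f,w}$ is an isomorphism when $2w - s \leq 0$, with the single exception of the $h_0$-tower at $(s,w) = (0,0)$, for which $\tau$ is zero because $\Ext_\C^{0,f,1} = 0$. The latter vanishing holds because the only stem-$0$ classes at the $E_2$-page are powers of $h_0$, which all sit in weight $0$. I would also use that $\tau$ remains injective on $\Ext_\C$ slightly above this region, which is what is needed to kill the kernel term; this is standard because the first $\tau$-torsion class in $\Ext_\C$, namely $h_1^4$, lies in coweight $4$. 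Both facts are standard consequences of the $\C$-motivic May spectral sequence and the comparison with classical Ext.

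Granted these inputs, the lemma follows at once. For $(s,w) \neq (0,0)$ satisfying $2w - s < 1$, $\tau$-periodicity forces the cokernel term to vanish and $\tau$-injectivity forces the kernel term to vanish, so $\Ext_\C^{s,f,w}(S/\tau) = 0$. For $(s,w) = (0,0)$, the kernel term vanishes since $\Ext_\C^{-1,f+1,1} = 0$ by connectivity, and the cokernel term equals $\Ext_\C^{0,f,0} = \F_2\{h_0^f\}$ because $\Ext_\C^{0,f,1} = 0$; hence $i$ identifies $\Ext_\C^{0,f,0}(S/\tau)$ with $\F_2\{i(h_0^f)\}$.

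The main obstacle is pinning down the $\tau$-periodicity and $\tau$-injectivity precisely and verifying them uniformly across the range, especially on the boundary line $2w = s$ where the relevant $\Ext_\C$ groups can themselves be nontrivial (e.g.\ $\tau h_1^2 \in \Ext_\C^{2,2,1}$) but the $\tau$-multiplication must still be an isomorphism by weight considerations.
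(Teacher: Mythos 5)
Your reduction via the long exact sequence is formally sound, and the case analysis (cokernel and kernel terms, with special treatment of $(s,w)=(0,0)$) is correct as far as it goes. But the entire proof rests on two unproved inputs: (i) every nonzero element of $\Ext_\C^{s,f,w}$ with $2w-s\le 0$ other than a power of $h_0$ is divisible by $\tau$, and (ii) $\Ext_\C$ has no $\tau$-torsion in degrees with $2w-s\le 3$. These are precisely the statements of Proposition \ref{prop:Ext_C-tau}, which the paper \emph{deduces from} Lemma \ref{lem:S/tau-vanish} via the same long exact sequence --- you have run the implication backwards. That is not automatically circular, but it obligates you to prove (i) and (ii) independently, and the appeal to ``standard consequences of the May spectral sequence and comparison with classical Ext'' does not do this. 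Comparison with classical $\Ext$ only controls $\Ext_\C[\tau^{-1}]$ and says nothing about $\tau$-divisibility or $\tau$-torsion before inverting $\tau$. A direct May spectral sequence argument is possible in principle but is a genuine global computation: on the May $E_1$-page the non-$\tau$-divisible classes with $2w-s=0$ form the whole polynomial algebra on the generators $h_{i,0}$, and one must control differentials (which raise $2w-s$ by $1$) and hidden $\tau$-extensions in every stem and filtration to conclude that only the $h_0$-tower survives; no reference is given where this is carried out in the generality needed here.

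The paper instead proves the lemma in one step from \cite{GWX}*{Theorem 1.14}: $\Ext_\C^{s,f,w}(S/\tau)$ is identified with the part of the algebraic Novikov $E_2$-page contributing to the Adams--Novikov $E_2$-page in stem $s$ and filtration $2w-s$; negative filtration forces vanishing, and filtration $0$ is the $h_0$-tower on $\Ext^0_{BP_*BP}(BP_*,BP_*)$, concentrated in stem $0$. If you wish to keep your route, you would in effect need this same input to justify (i) and (ii), at which point the detour through the long exact sequence buys nothing. (A small terminological point: in this paper's conventions $h_1^4$ lies in coweight $s-w=0$; the quantity that equals $4$ for $h_1^4$ is $2w-s$, which is the one relevant to your argument.)
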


\begin{proof}
The algebraic Novikov spectral sequence
converges to the Adams-Novikov $E_2$-page 
$\Ext_{BP_* BP}(BP_*, BP_*)$ \cite{Miller75} \cite{Novikov67}. 
The group $\Ext_\C^{s,f,w}(S/\tau)$ is isomorphic to 
a part of the algebraic Novikov $E_2$-page that contributes to 
the Adams-Novikov $E_2$-page in stem $s$ and filtration
$2w-s$ \cite{GWX}*{Theorem 1.14}.
The result follows from an elementary analysis of the
algebraic Novikov $E_2$-page in Adams-Novikov filtration zero.
\end{proof}

The vanishing result of Lemma \ref{lem:S/tau-vanish} can be applied to
the long exact sequence (\ref{eq:S/tau}) to obtain information
about multiplication by $\tau$ on $\Ext_\C$.

\begin{prop}
\label{prop:Ext_C-tau}
Let $x$ be a non-zero element of $\Ext_\C$ of degree $(s,f,w)$.
\begin{enumerate}
\item
If $x$ is not divisible by $\tau$, then $2w - s \geq 1$, 
or $x = h_0^f$.
\item
If $x$ is annihilated by $\tau$, then $2w - s \geq 4$.
\end{enumerate}
\end{prop}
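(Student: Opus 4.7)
The plan is to apply the long exact sequence \eqref{eq:S/tau} in two different positions, using Lemma \ref{lem:S/tau-vanish} as the source of vanishing, thereby reducing each claim to an exactness argument.

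For part (1), since $x$ is not divisible by $\tau$, exactness of \eqref{eq:S/tau} at $\Ext_\C^{s,f,w}$ forces $i(x) \neq 0$ in $\Ext_\C^{s,f,w}(S/\tau)$. Lemma \ref{lem:S/tau-vanish} then rules out any $(s,f,w)$ with $2w-s < 1$ unless $(s,w) = (0,0)$, in which case the target is generated by $i(h_0^f)$ and we obtain $i(x) = i(h_0^f)$. Exactness then gives $x - h_0^f = \tau y$ for some $y \in \Ext_\C^{0,f,1}$. The stem-zero part of $\Ext_\C$ is built from $h_0$ (weight $0$) and $\tau$ (weight $-1$) and hence is concentrated in nonpositive weights, so $y = 0$ and $x = h_0^f$.

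For part (2), the hypothesis $\tau x = 0$ places $x$ in the kernel of $\tau: \Ext_\C^{s,f,w} \to \Ext_\C^{s,f,w-1}$. Reindexing \eqref{eq:S/tau} via $(s,f,w) \mapsto (s+1,f-1,w-1)$ produces the exact strand
\[
\Ext_\C^{s+1,f-1,w-1}(S/\tau) \xrightarrow{p} \Ext_\C^{s,f,w} \xrightarrow{\tau} \Ext_\C^{s,f,w-1},
\]
so $x = p(z)$ for some nonzero $z \in \Ext_\C^{s+1,f-1,w-1}(S/\tau)$. Lemma \ref{lem:S/tau-vanish} then forces either $2(w-1) - (s+1) \geq 1$, i.e., $2w-s \geq 4$, or $(s+1,w-1) = (0,0)$. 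The exceptional case would require $z = i(h_0^{f-1})$, but then $p(z) = 0$ because $p \circ i = 0$, contradicting $x \neq 0$; hence $2w - s \geq 4$.

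The proof is essentially immediate from Lemma \ref{lem:S/tau-vanish}; the one mildly delicate point is the edge case in part (1), where one must promote $i(x) = i(h_0^f)$ to the equality $x = h_0^f$ itself rather than to equality modulo a $\tau$-divisible correction. This relies on the standard vanishing $\Ext_\C^{0,f,w} = 0$ for $w > 0$, which is a routine consequence of the structure of the stem-zero part of the $\C$-motivic Steenrod $\Ext$.
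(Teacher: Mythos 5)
Your proof is correct and takes essentially the same route as the paper: run the long exact sequence (\ref{eq:S/tau}) and feed in the vanishing from Lemma \ref{lem:S/tau-vanish}, once at the $i$-spot for part (1) and once at the $p$-spot for part (2). You are in fact a bit more careful than the paper's own proof at the edge cases (eliminating the $\tau$-divisible correction $\tau y$ in part (1) via the vanishing of the positive-weight part of the $0$-stem, and discarding the exceptional class via $p \circ i = 0$ in part (2)), but these are refinements of the same argument rather than a different approach.
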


\begin{proof}
For part (1), the image
$i(x)$ of $x$ in $\Ext_\C(S/\tau)$ must be non-zero.
By Lemma \ref{lem:S/tau-vanish}, if $2w-s$ is less than 1, then
$x$ must be $h_0^f$.

For part (2), $x$ must lie in the image of $p$.
The pre-image of $x$ has degree $(s+1, f-1, w-1)$.
Lemma \ref{lem:S/tau-vanish} implies that
\begin{align*}
2(w-1) - (s+1) & = 2w - s - 3\geq 1. \qedhere
\end{align*}
\end{proof}

\section{The $\rho$-Bockstein spectral sequence}
\label{sctn:Bockstein}

Recall from \cite{GHIR20}*{Section 2} that $\Ext_{C_2}$ splits
as $\Ext_\R \oplus \Ext_{NC}$, where $\Ext_{NC}$ is associated to
the ``negative cone" in the $C_2$-equivariant cohomology of a point.
Betti realization induces the natural inclusion
\begin{equation}
\label{eq:split}
\Ext_\R \map \Ext_\R \oplus \Ext_{NC}.
\end{equation}
In order to obtain an isomorphism $\Ext_\R \map \Ext_{C_2}$ in a range
of degrees,
we must show that $\Ext_{NC}$ vanishes in that range.

The groups $\Ext_{NC}$ can be computed by a $\rho$-Bockstein spectral
sequence, denoted $E^-$ in \cite{GHIR20}.
The $E^-_1$-page of this spectral sequence contains elements of two
types.

First, there are elements of the form
$\nc{a}{b} x$, where $0 \leq a$,
$1 \leq b$, and
$x$ is an element
of $\Ext_{\C}$ that is $\tau$-free and not divisible by $\tau$.
If $x$ has degree $(s, f, w)$ in $\Ext_\C$, then
$\nc{a}{b} x$ has degree
$(s + a, f, w + a + b + 1)$.

The second type of element in $E^-_1$ is of the form
$\Qc{a}{b} x$, where $0 \leq a$, $0 \leq b \leq k$, and
$x$ is an element of $\Ext_{\C}$ that is annihilated by $\tau$
and is divisible by $\tau^k$ but not by $\tau^{k+1}$.
If $x$ has degree $(s, f, w)$ in $\Ext_\C$,
then $\Qc{a}{b} x$ has degree
$(s + a + 1, f - 1, w + a + b + 1)$.

\begin{lemma}
\label{lem:Eminus-vanish}
\mbox{}
\begin{enumerate}
\item
If $\nc{a}{b} x$ 
is a non-zero element of $E^-_1$ with degree $(s, f, w)$, then
$x = h_0^f$ or $2w - s \geq a + 2b + 3$.
\item
If $\Qc{a}{b} x$ 
is a non-zero element of $E^-_1$ with degree $(s, f, w)$, 
then 
\[
2w - s \geq a + 2b + 5.
\]
\end{enumerate}
\end{lemma}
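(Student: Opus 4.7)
The plan is to reduce both parts directly to Proposition \ref{prop:Ext_C-tau} by a careful bookkeeping of the degree shifts built into the symbols $\gamma/(\rho^a\tau^b)$ and $Q/(\rho^a\tau^b)$. In each case the element $x$ lives in $\Ext_\C$ with some tridegree, and the stated degree formulas let me express $2w-s$ (for the $E^-_1$ element) in terms of the analogous quantity for $x$; Proposition \ref{prop:Ext_C-tau} provides precisely the lower bound on that quantity that I need.

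For part (1), I would let $x$ have tridegree $(s', f, w')$ in $\Ext_\C$, so that $(s,f,w) = (s'+a,\, f,\, w'+a+b+1)$, and then compute
\[
2w' - s' \,=\, 2w - s - a - 2b - 2.
\]
Since $x$ is not divisible by $\tau$, Proposition \ref{prop:Ext_C-tau}(1) forces either $x = h_0^f$ or $2w' - s' \geq 1$; in the latter case the displayed identity rearranges to $2w - s \geq a + 2b + 3$, as required.

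For part (2), I would let $x$ have tridegree $(s', f+1, w')$ in $\Ext_\C$ (the Adams filtration of $x$ is one higher than that of the $E^-_1$ element, because of the $f-1$ in the formula for the degree of $Q/(\rho^a\tau^b)\cdot x$), so that $s = s' + a + 1$ and $w = w' + a + b + 1$, giving
\[
2w' - s' \,=\, 2w - s - a - 2b - 1.
\]
Since $x$ is annihilated by $\tau$, Proposition \ref{prop:Ext_C-tau}(2) forces $2w' - s' \geq 4$, which rearranges to $2w - s \geq a + 2b + 5$.

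There is no serious obstacle; both parts amount to a bookkeeping wrapper around Proposition \ref{prop:Ext_C-tau}. The one point that requires attention is matching each element type with the correct clause: the $\gamma$-type element is controlled by clause (1) because $x$ is not divisible by $\tau$, and the $Q$-type element is controlled by clause (2) because $x$ is annihilated by $\tau$. These hypotheses are precisely the ones built into the definition of the two types of generators on the $E^-_1$-page, so the reduction is clean.
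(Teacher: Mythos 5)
Your proposal is correct and is essentially identical to the paper's own proof: both parts are obtained by solving the stated degree formulas for the tridegree of $x$ in $\Ext_\C$ and then applying the corresponding clause of Proposition \ref{prop:Ext_C-tau}. The arithmetic ($2w-s-a-2b-2\geq 1$ in part (1) and $2w-s-a-2b-1\geq 4$ in part (2)) matches the paper exactly.
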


\begin{proof}
For part (1),
the element $x$ has degree $(s - a, f, w - a - b - 1)$.
Since $x$ is not divisible by $\tau$,
Proposition \ref{prop:Ext_C-tau} implies that
$x = h_0^f$ or 
\[
2(w - a - b - 1) - (s - a) = 2w - s - a - 2b - 2 \geq 1.
\]

For part (2), 
the element $x$ has degree $(s - a - 1, f + 1, w - a - b - 1)$.
Since $x$ is annihilated by $\tau$,
Proposition \ref{prop:Ext_C-tau} implies that
\begin{align*}
2(w - a - b - 1) - (s - a - 1) & = 2w - s - a - 2b - 1 \geq 4. \qedhere
\end{align*}
\end{proof}

\begin{lemma}
\label{lem:perm-cycles}
Amongst elements of the form $\nc{a}{b} h_0^f$ in $E^-_1$, the only
non-zero permanent cycles are $\frac{\gamma}{\tau^{2k+1}} h_0^f$
in degree $(0, f, 2k+2)$ for all $k \geq 0$.
\end{lemma}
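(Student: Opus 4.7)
The plan is to compute the $d_1$ differential of the $\rho$-Bockstein spectral sequence on the family $\frac{\gamma}{\rho^a \tau^b} h_0^f$, and then to rule out higher differentials via tridegree bookkeeping using Lemma~\ref{lem:Eminus-vanish}. The $d_1$ formula on the negative cone is dictated by the relation $d_1(\tau) = \rho h_0$ together with the Leibniz rule and the fact that $h_0$, $\rho$, and $\gamma$ are $d_1$-cycles; this gives
\[
d_1\left(\frac{\gamma}{\rho^a \tau^b} h_0^f\right) = b \cdot \frac{\gamma}{\rho^{a-1} \tau^{b+1}} h_0^{f+1} \pmod{2},
\]
interpreted as zero when $a=0$ (since the $a=0$ part of the negative cone is annihilated by $\rho$).

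From this formula, the $d_1$-cycles in the given family are exactly those with $a=0$ or $b$ even, and the elements with $b$ even are $d_1$-boundaries coming from $\frac{\gamma}{\rho^{a+1} \tau^{b-1}} h_0^{f-1}$ whenever that preimage lies in $E^-_1$. A case-by-case check of the remaining possibilities---in particular, $f=0$ together with $b$ even, where the nominal preimage involves $h_0^{-1}$---shows that the only $d_1$-surviving classes in the family are $\frac{\gamma}{\tau^{2k+1}} h_0^f$ at tridegree $(0, f, 2k+2)$.

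To finish, I would verify that these surviving classes are permanent cycles by a tridegree argument. A potential $d_r$ target of $\frac{\gamma}{\tau^{2k+1}} h_0^f$ lies at tridegree $(-r, f+1, 2k+2)$; enumerating candidate targets of the two forms $\frac{\gamma}{\rho^{a'} \tau^{b'}} y$ and $\frac{Q}{\rho^{a'} \tau^{b'}} y$, and applying Lemma~\ref{lem:Eminus-vanish} together with the standard vanishing of $\Ext_\C^{s,f,w}$ in negative stems, rules out non-zero targets. The main obstacle will be the edge cases in the $d_1$-boundary step (small $b$ or $f$), which may require either a direct inspection of low-dimensional bidegrees or an appeal to higher-order terms in the $C_2$-equivariant coproduct contributing to $d_r$ with $r \geq 2$.
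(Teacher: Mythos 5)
Your core input is the same as the paper's: everything hinges on the Bockstein differential $d_1\bigl(\frac{\gamma}{\rho \tau^{2k+1}} h_0^f\bigr) = \frac{\gamma}{\tau^{2k+2}} h_0^{f+1}$, which the paper simply quotes from \cite{GHIR20}*{Proposition 7.7} and \cite{GI19}*{Lemma 4.1}. Your Leibniz-style derivation lands on the correct formula, but as written it is only a heuristic: the negative cone is not a ring containing $\tau^{-1}$, and $E^-_1$ is a module, so ``$d_1(\tau)=\rho h_0$ plus the Leibniz rule'' does not literally apply; you would need to argue via the comodule/module structure of the negative cone or simply cite the sources, as the paper does. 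Also, a small bookkeeping slip: a Bockstein $d_r$ changes $(s,f,w)$ by $(-1,+1,0)$ for every $r$ (the factor $\rho^r$ is internal filtration bookkeeping), not by $(-r,+1,0)$; this is harmless for your vanishing argument since the stem is negative either way.

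The genuine gap is in the middle step, where you assert that a case-by-case check shows the only $d_1$-survivors in the family are $\frac{\gamma}{\tau^{2k+1}} h_0^f$. That is not what the check gives. For $x = 1$ (i.e.\ $f=0$) and $b$ even, the class $\frac{\gamma}{\rho^a \tau^b}$ is a $d_1$-cycle (your own formula, or $a=0$), and it can never be a boundary: it has Adams filtration $0$, every Bockstein differential raises filtration by one, and $E^-_1$ is concentrated in filtrations $\geq 0$ (the $\frac{Q}{\rho^{a}\tau^{b}}x$ classes have filtration $\geq 2$ because $\tau$-torsion in $\Ext_\C$ first occurs in positive filtration). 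So these classes survive $d_1$, contrary to your claim. Your fallback---higher-order differentials---could in principle dispose of the $a \geq 1$ cases, but that requires the negative-cone analogues of the higher differentials $d_{2^j}(\tau^{2^j}) = \rho^{2^j}\tau^{2^j-1}h_j$, which you neither state nor verify; and for $a = 0$ the class $\frac{\gamma}{\tau^{2k}}$ sits in stem $0$ and filtration $0$, so it can neither support nor receive any differential whatsoever, and no appeal to higher-order terms can remove it. In other words, the plan as written cannot be completed for the $x=1$, $b$ even classes; any correct write-up has to treat them separately (for instance by restricting the statement to the $h_0$-divisible range actually needed, and noting that these leftover classes satisfy $2w-s = a+2b+2 \geq 6$ and hence never interfere with the way the lemma is used in Proposition \ref{prop:ExtNC}). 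The paper's one-line proof leaves this edge case implicit, but your proposal makes a positively false intermediate assertion about it, which is what needs to be repaired.
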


\begin{proof}
As in \cite{GHIR20}*{Proposition 7.7} or \cite{GI19}*{Lemma 4.1}, 
there are Bockstein differentials
\begin{align*}
d_1\left( \frac{\gamma}{\rho \tau^{2k+1}} h_0^f \right) & = 
\frac{\gamma}{\tau^{2k+2}} h_0^{f+1}. \qedhere
\end{align*}
\end{proof}

\begin{prop}
\label{prop:ExtNC}
Let $y$ be a non-zero element of $\Ext_{NC}$ of degree 
$(s, f, w)$.  Then $y$ equals $\frac{\gamma}{\tau} h_0^f$,
or $2w - s \geq 5$.
\end{prop}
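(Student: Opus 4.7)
The plan is to leverage the convergence of the $\rho$-Bockstein spectral sequence $E^-$ to $\Ext_{NC}$, together with the vanishing bounds of Lemma \ref{lem:Eminus-vanish} and the description of permanent cycles in Lemma \ref{lem:perm-cycles}. A non-zero $y \in \Ext_{NC}^{s,f,w}$ is detected by a non-zero permanent cycle of the same tridegree on the $E^-_\infty$-page. Since permanent cycles sit inside the $E^-_1$-page, it suffices to show that every such class either lies in the family $\frac{\gamma}{\tau} h_0^f$ or satisfies $2w - s \geq 5$; the degree of $y$ agrees with the degree of its detecting class, so the conclusion transfers.

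Next, I would split into cases according to the two types of generators described for $E^-_1$. If $y$ is detected by some $\frac{Q}{\rho^a \tau^b} x$, Lemma \ref{lem:Eminus-vanish}(2) gives $2w - s \geq a + 2b + 5 \geq 5$ immediately, since $a, b \geq 0$. If $y$ is detected by some $\frac{\gamma}{\rho^a \tau^b} x$ with $x \neq h_0^f$, then Lemma \ref{lem:Eminus-vanish}(1) yields $2w - s \geq a + 2b + 3 \geq 5$, using $a \geq 0$ and $b \geq 1$.

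The only remaining possibility is that $y$ is detected by an element of the form $\frac{\gamma}{\rho^a \tau^b} h_0^f$. Here Lemma \ref{lem:perm-cycles} is decisive: among such expressions, the only non-zero permanent cycles are $\frac{\gamma}{\tau^{2k+1}} h_0^f$ in tridegree $(0, f, 2k+2)$ for $k \geq 0$. For such a class, $2w - s = 4k + 4$. When $k \geq 1$, this gives $2w - s \geq 8 \geq 5$, placing $y$ in the inequality regime. The single exception is $k = 0$, which produces precisely $y = \frac{\gamma}{\tau} h_0^f$, the allowed exceptional family in the statement.

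I do not expect a serious obstacle here; the heavy lifting has already been done in Lemmas \ref{lem:Eminus-vanish} and \ref{lem:perm-cycles}, and the argument is a clean case analysis on the two generator types followed by the permanent-cycle classification. The only point requiring care is ensuring that the tridegree bookkeeping from the $E^-_1$-page descends correctly to $\Ext_{NC}$, but this follows from the fact that the $\rho$-Bockstein spectral sequence is a spectral sequence of tri-graded modules, so the representing permanent cycle and $y$ share the degree $(s,f,w)$.
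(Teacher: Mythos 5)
Your proposal is correct and follows essentially the same route as the paper: represent $y$ by a class on the $E^-_1$-page, apply Lemma \ref{lem:Eminus-vanish} to the two generator types, and invoke Lemma \ref{lem:perm-cycles} to isolate the exceptional family $\frac{\gamma}{\tau}h_0^f$ (with $2w-s = 4k+4 \geq 8$ for $k \geq 1$). The paper's version is merely terser; your case analysis and degree bookkeeping match it exactly.
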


\begin{proof}
The element $y$ is represented by an element of the 
$\rho$-Bockstein $E^-_1$-page.
If $y$ is of the form $\nc{a}{b} x$,
then $0 \leq a$ and $1 \leq b$, so 
Lemma \ref{lem:perm-cycles} and 
part (1) of
Lemma \ref{lem:Eminus-vanish} gives the desired result.

On the other hand, if $y$ is of the form
$\Qc{a}{b} x$, then
$0 \leq a$ and $0 \leq b$, so part (2) of
Lemma \ref{lem:Eminus-vanish} gives the desired result.
\end{proof}

\begin{thm}
\label{thm:Ext-iso}
Betti realization 
$\Ext_\R^{s,f,w} \map \Ext_{C_2}^{s,f,w}$ is:
\begin{enumerate}
\item
an injection in all degrees.
\item
an isomorphism if $2w - s < 5$, except when $s = 0$ and $w = 2$.
\end{enumerate}
\end{thm}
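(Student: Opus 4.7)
The plan is to deduce Theorem \ref{thm:Ext-iso} as a direct corollary of the splitting $\Ext_{C_2} \cong \Ext_\R \oplus \Ext_{NC}$ recalled at the start of Section \ref{sctn:Bockstein}, together with the vanishing result Proposition \ref{prop:ExtNC}. Since Betti realization corresponds to the canonical inclusion of the first summand, as in the map \eqref{eq:split}, part (1) is immediate: the inclusion of a direct summand is injective on the nose.

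For part (2), I would observe that the cokernel of the inclusion \eqref{eq:split} in tridegree $(s,f,w)$ is exactly $\Ext_{NC}^{s,f,w}$, so Betti realization is an isomorphism in this tridegree if and only if $\Ext_{NC}^{s,f,w} = 0$. Thus the problem reduces to showing that $\Ext_{NC}^{s,f,w}$ vanishes whenever $2w - s < 5$ and $(s,w) \neq (0,2)$. Let $y$ be a non-zero element of $\Ext_{NC}^{s,f,w}$. Proposition \ref{prop:ExtNC} yields two possibilities: either $2w - s \geq 5$, or $y = \frac{\gamma}{\tau} h_0^f$. By Lemma \ref{lem:perm-cycles} (with $k=0$), the element $\frac{\gamma}{\tau} h_0^f$ lives in degree $(0, f, 2)$, so the second possibility forces $(s,w) = (0,2)$. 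Both possibilities are excluded by hypothesis, so $y$ cannot exist and $\Ext_{NC}^{s,f,w} = 0$, as required.

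The heavy lifting, namely the $\rho$-Bockstein vanishing estimates in Lemma \ref{lem:Eminus-vanish}, the identification of permanent cycles in Lemma \ref{lem:perm-cycles}, and their combination in Proposition \ref{prop:ExtNC}, has already been completed. Consequently there is no serious obstacle to overcome here; the only subtlety is matching up the exceptional tridegree $(0, f, 2)$ with the exception $(s,w) = (0,2)$ stated in the theorem, and verifying that the $\frac{\gamma}{\tau} h_0^f$ classes genuinely prevent surjectivity there, which they do since they exist for every $f \geq 0$ and lie outside the image of \eqref{eq:split}.
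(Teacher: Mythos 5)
Your proposal is correct and follows exactly the paper's argument: it uses the splitting $\Ext_{C_2}\cong\Ext_\R\oplus\Ext_{NC}$ from \eqref{eq:split} for injectivity, and Proposition \ref{prop:ExtNC} for the vanishing of $\Ext_{NC}$ away from the exceptional degree $(0,f,2)$. The only difference is that you spell out the diagram chase and the identification of the exceptional tridegree in more detail than the paper does.
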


\begin{proof}
Diagram (\ref{eq:split}) shows that Betti realization is an injection
and  induces an isomorphism if and only if
$\Ext_{NC}$ vanishes.  Proposition \ref{prop:ExtNC} provides the needed
vanishing result for $\Ext_{NC}$, since
$\frac{\gamma}{\tau} h_0^f$ has degree
$(0, f, 2)$.
\end{proof}

\section{The Adams spectral sequence}

Theorem \ref{thm:Ext-iso} shows that the
$\R$-motivic and $C_2$-equivariant Adams $E_2$-pages are 
isomorphic in a range.  Now we will extend this isomorphism to
higher Adams pages and then to stable homotopy groups.
We write $E^\R_r(s, f, w)$ and $E^{C_2}_r(s,f,w)$ for the
$\R$-motivic and $C_2$-equivariant Adams $E_r$-pages in degree
$(s,f,w)$ respectively.

\begin{lemma}
\label{lem:Adams-perm-cycles}
In the $C_2$-equivariant Adams spectral sequence,
the element $\frac{\gamma}{\tau} h_0^f$ is a
permanent cycle.
\end{lemma}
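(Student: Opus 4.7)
My plan is to rule out every Adams differential $d_r(\frac{\gamma}{\tau} h_0^f)$ for $r \geq 2$ by showing that the target group vanishes. Since $\frac{\gamma}{\tau} h_0^f$ has tridegree $(0, f, 2)$, the differential $d_r$ lands in $\Ext_{C_2}^{-1, f+r, 2}$, a tridegree with $2w - s = 5$, just outside the range of Theorem~\ref{thm:Ext-iso}. Using the splitting $\Ext_{C_2}^{-1,f+r,2} \cong \Ext_\R^{-1,f+r,2} \oplus \Ext_{NC}^{-1,f+r,2}$, I would verify that each summand vanishes.

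For the negative cone summand, the description of the $\rho$-Bockstein $E^-_1$-page in Section~\ref{sctn:Bockstein} shows that every generator lives in non-negative stems: a class $\nc{a}{b} x$ has stem $s_x + a$, and a class $\Qc{a}{b} x$ has stem $s_x + a + 1$, both non-negative since $s_x \geq 0$ for $x \in \Ext_\C$ and $a \geq 0$. It follows that $\Ext_{NC}$ is supported in non-negative stems, so $\Ext_{NC}^{-1, f+r, 2} = 0$.

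For the $\R$-motivic summand, I would appeal to the analogous $\rho$-Bockstein spectral sequence converging to $\Ext_\R$, whose $E_1$-page is $\F_2[\rho] \otimes \Ext_\C$ with $\rho$ in tridegree $(-1, 0, -1)$. Any class in $\Ext_\R^{-1, f+r, 2}$ is detected there by some $\rho^k x$ with $k \geq 1$ and $x \in \Ext_\C^{k-1, f+r, k+2}$, and this $x$ has $w - s = 3 > 0$. But every Milnor generator of the $\C$-motivic dual Steenrod algebra has topological degree strictly greater than its weight, so length-$f$ cobar monomials satisfy $t - w \geq f$, equivalently $w \leq s$. This forces $\Ext_\C^{k-1, f+r, k+2} = 0$, and hence $\Ext_\R^{-1, f+r, 2} = 0$.

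Combining the two vanishings yields $\Ext_{C_2}^{-1, f+r, 2} = 0$ for every $r \geq 2$, which proves the lemma. The point most likely to need care is the vanishing of $\Ext_\C^{s,f,w}$ for $w > s$: it is a standard consequence of the bidegrees of the Milnor generators, but is not recorded explicitly earlier in the paper, so it would benefit from a brief justifying remark or a citation to \cite{Isaksen14c}.
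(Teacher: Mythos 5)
Your argument is correct and follows essentially the same route as the paper: the target of any $d_r$ on $\frac{\gamma}{\tau}h_0^f$ lies in tridegree $(-1, f+r, 2)$, and both summands of $\Ext_{C_2}$ vanish there --- $\Ext_\R$ because the coweight $s-w=-3$ is negative, and $\Ext_{NC}$ because the stem is negative. The paper simply asserts these two vanishing facts, whereas you derive them from the respective $\rho$-Bockstein $E_1$-pages (including the standard $w\le s$ vanishing line for $\Ext_\C$); that extra detail is accurate and only strengthens the write-up.
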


\begin{proof}
Targets of possible differentials on 
$\frac{\gamma}{\tau} h_0^f$ lie in degrees $(-1, r + f, 2)$.
The Adams $E_2$-page is zero in those degrees, as
$\Ext_\R$ vanishes when the coweight $s-w$ is negative 
and $\Ext_{NC}$ vanishes when the stem $s$ is negative.
\end{proof}

\begin{prop}
\label{prop:Adams-iso}
Let $r \geq 2$, or let $r = \infty$.  The Betti realization map
\[
E^\R_r (s, f, w) \map E^{C_2}_r(s,f,w)
\]
\begin{enumerate}
\item
is an isomorphism if $2w-s < 5$ and $(s,w) \neq (0,2)$.
\item
is an injection if $2w-s = 5$.
\end{enumerate}
\end{prop}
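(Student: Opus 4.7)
The plan is to induct on $r$, with base case $r = 2$ provided by Theorem~\ref{thm:Ext-iso}, and to derive the $r = \infty$ case at the end from large finite $r$. For the inductive step, I would apply a five-lemma comparison to the three-term complex
\[
E_r(s+1, f-r, w) \xrightarrow{d_r} E_r(s, f, w) \xrightarrow{d_r} E_r(s-1, f+r, w),
\]
whose middle cohomology is $E_{r+1}(s, f, w)$. Naturality of Betti realization as a map of spectral sequences supplies a commutative ladder between the $\R$-motivic and $C_2$-equivariant versions of this complex.

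For part (1), an isomorphism at $E_{r+1}(s, f, w)$ follows from surjection on the source, isomorphism on the middle, and injection on the target. The source $(s+1, f-r, w)$ has $2w - (s+1) \leq 3$, and the only would-be exceptional coordinate $(s+1, w) = (0, 2)$ corresponds to $(s, w) = (-1, 2)$, which has $2w - s = 5$ and hence lies outside part (1). The middle is exactly the inductive hypothesis. The target $(s-1, f+r, w)$ has $2w - (s-1) \leq 5$; the inductive hypothesis yields iso or injection except at $(s-1, w) = (0, 2)$, i.e., $(s, w) = (1, 2)$. This is the main obstacle. For part (2), injection at $E_{r+1}$ requires only injection on the middle and surjection on the source; the source has $2w - (s+1) = 4$, handled by the inductive hypothesis except at $(s, w) = (-1, 2)$, where both $\Ext_\R$ and $\Ext_{NC}$ vanish (coweight $-3$ and stem $-1$, respectively, as in the proof of Lemma~\ref{lem:Adams-perm-cycles}), so all $E_r$-pages are zero there and the injection is vacuous.

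To resolve the target exception $(0, f+r, 2)$, I would strengthen the inductive hypothesis to carry along the extra invariant that Betti realization $E^\R_r(0, f, 2) \to E^{C_2}_r(0, f, 2)$ is injective, with cokernel generated by the class of $\frac{\gamma}{\tau} h_0^f$. The base case is Proposition~\ref{prop:ExtNC} combined with the splitting $\Ext_{C_2} = \Ext_\R \oplus \Ext_{NC}$ recalled in Section~\ref{sctn:Bockstein}. To propagate this invariant from $r$ to $r+1$, I would invoke Lemma~\ref{lem:Adams-perm-cycles}, which shows that $\frac{\gamma}{\tau} h_0^f$ is a permanent cycle, together with the observation that a differential $d_r$ hitting degree $(0, f, 2)$ must originate at $(1, f-r, 2)$, where the main inductive hypothesis already gives an isomorphism with the $\R$-motivic side; hence every such differential factors through Betti realization and lands in the Betti image, so $\frac{\gamma}{\tau} h_0^f$ is never a boundary and persists to $E_{r+1}$. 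This is the crux of the argument.

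Finally, the case $r = \infty$ reduces to large finite $r$: in each fixed tridegree $(s, f, w)$ only finitely many Adams differentials can be nonzero, so $E_\infty(s, f, w) = E_r(s, f, w)$ for $r$ sufficiently large, and the isomorphism or injection on finite pages carries over.
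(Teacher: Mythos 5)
Your proposal is correct and follows the same skeleton as the paper's proof: induction on $r$ with base case Theorem \ref{thm:Ext-iso}, a diagram chase on the three-term $d_r$-complexes with Betti realization as the comparison map, and passage to $r=\infty$ by stabilization in each fixed tridegree. The only real differences are in how the exceptional column $(s,w)=(0,2)$ is handled. You carry an explicit auxiliary inductive invariant — injectivity of $E^\R_r(0,f,2) \map E^{C_2}_r(0,f,2)$ with cokernel spanned by the class of $\frac{\gamma}{\tau} h_0^f$, propagated using Lemma \ref{lem:Adams-perm-cycles} together with the isomorphism at the source degree $(1,f-r,2)$ — and use it to supply the injection needed on the target term when $(s,w)=(1,2)$ in part (1). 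The paper achieves the same effect implicitly: its induction step for part (2) is run for all $2w-s<6$ rather than just $2w-s=5$, so injectivity at $(0,f,2)$ is silently part of the inductive package; your version makes explicit what is tersest in the paper, and you correctly identified this as the crux. Conversely, for the remaining case $(s,w)=(-1,2)$ the paper argues inside the chase via Lemma \ref{lem:Adams-perm-cycles} (the left arrow is then only injective, but the extra class is a permanent cycle), whereas you note that the groups at $(-1,f,2)$ vanish outright — by the same vanishing of $\Ext_\R$ in negative coweight and $\Ext_{NC}$ in negative stems used to prove that lemma — so the case is vacuous; this is a mild simplification of the paper's argument.
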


\begin{proof}
The proof is by induction on $r$.  The base case $r = 2$ is established
in Theorem \ref{thm:Ext-iso}.
For the sake of induction, assume that the result is known for $r$.
Consider the diagram
\[
\xymatrix{
E^\R_r(s+1,f-r,w) \ar[d] \ar[r]^-{d_r} & 
E^\R_r(s,f,w) \ar[d] \ar[r]^-{d_r} & 
E^\R_r(s-1,f+r,w) \ar[d] \\
E^{C_2}_r(s+1,f-r,w) \ar[r]_-{d_r} &
E^{C_2}_r(s,f,w) \ar[r]_-{d_r} &
E^{C_2}_r(s-1,f+r,w).
}
\]

For the induction step in part (1), 
suppose that $2w-s < 5$, and that $(s,w) \neq (0,2)$.
Then the induction assumption implies that
the left and middle vertical arrows are isomorphisms, while the
right vertical arrow is an injection.  A standard diagram chase
implies that $E^\R_{r+1}(s,f,w) \map E^{C_2}_{r+1}(s,f,w)$ is an
isomorphism.

The induction step for part (2) splits into two cases.
Suppose that $2w-s<6$ and that $(s,w) \neq (-1,2)$.
The induction assumption implies that the
left vertical arrow is an isomorphism and the middle vertical
arrow is an injection (and nothing can be said about the right vertical
arrow).  Again, a diagram chase shows that 
$E^\R_{r+1}(s,f,w) \map E^{C_2}_{r+1}(s,f,w)$ is an injection.

Now suppose that $(s,w) = (-1,2)$.
In this case, the left vertical arrow is known only to be an injection.  
However, Lemma \ref{lem:Adams-perm-cycles}
implies that this doesn't matter, and the same diagram chase gives 
the desired conclusion.
This finishes the induction step for part (1).

Finally, the case $r = \infty$ follows from the previous cases, since 
$E^\R_\infty(s,f,w)$ and $E^{C_2}_\infty(s,f,w)$ are equal to
$E^\R_r(s,f,w)$ and $E^{C_2}_r(s,f,w)$ for $r > N$, where $N$ depends
on $(s,f,w)$.
\end{proof}

\begin{ex}
\label{ex:sharp}
Consider the elements $\frac{\gamma}{\tau} P^k h_1$ in degree
$(8k + 1, 4k + 1, 4k + 3)$.  Note that
\[
2(4k+3) - (8k + 1) = 5,
\]
so these elements lie just outside the range in part (1) of
Theorem \ref{thm:pi-iso}.
These elements are permanent cycles in both the 
$\rho$-Bockstein and Adams spectral sequences, since they lie 
near the top of the Adams chart and there are no possible elements
to serve as targets for differentials.
Moreover, they are not hit by any $\rho$-Bockstein or Adams
differentials since they are detected by the equivariant
spectrum $ko_{C_2}$ \cite{GHIR20}.

Therefore, for all $k \geq 1$,
\[
\pi^{\R}_{8k+1, 4k+3} \map \pi^{C_2}_{8k+1,4k+3}
\]
is not an isomorphism.  Thus,
part (1) of Theorem \ref{thm:pi-iso} is sharp, in the sense that
there is no larger range of degrees bounded by linear inequalities
in which Betti realization is an isomorphism.
\end{ex}

\begin{bibdiv}
\begin{biblist}

\bib{BI19}{article}{
	author={Belmont, Eva},
	author={Isaksen, Daniel C.},
	title={$\R$-motivic stable stems},
	status={preprint},
	date={2020},
}

\bib{DI10}{article}{
   author={Dugger, Daniel},
   author={Isaksen, Daniel C.},
   title={The motivic Adams spectral sequence},
   journal={Geom. Topol.},
   volume={14},
   date={2010},
   number={2},
   pages={967--1014},
   issn={1465-3060},
   review={\MR{2629898 (2011e:55024)}},
   doi={10.2140/gt.2010.14.967},
}

\bib{DI17}{article}{
    author={Dugger, Daniel},
	author={Isaksen, Daniel C.},
     title={Low-dimensional {M}ilnor-{W}itt stems over {$\mathbb{R}$}},
   journal={Ann. K-Theory},
    volume={2},
      date={2017},
    number={2},
     pages={175--210},
      issn={2379-1683},
}

\bib{DI17b}{article}{
   author={Dugger, Daniel},
   author={Isaksen, Daniel C.},
   title={$\mathbb{Z}/2$-equivariant and $\mathbb{R}$-motivic stable stems},
   journal={Proc. Amer. Math. Soc.},
   volume={145},
   date={2017},
   number={8},
   pages={3617--3627},
   issn={0002-9939},
   review={\MR{3652813}},
   doi={10.1090/proc/13505},
}

\bib{GWX}{article}{
	author={Gheorghe, Bogdan},
	author={Wang, Guozhen},
	author={Xu, Zhouli},
	title={The special fiber of the motivic deformation of the stable homotopy category is algebraic},
    year={2018},
    status={preprint},
    eprint={arXiv:1809.09290},
}

\bib{GHIR20}{article}{
	author={Guillou, Bertrand J.},
	author={Hill, Michael A.},
	author={Isaksen, Daniel C.},
	author={Ravenel, Douglas Conner},
	title={The cohomology of $C_2$-equivariant $A(1)$ and the homotopy of $ko_{C_2}$},
	journal={Tunisian J. Math.},
	date={2020},
	volume={2},
	number={3},
	pages={567--632},
}

\bib{GI19}{article}{
	author={Guillou, Bertrand J.},
	author={Isaksen, Daniel C.},
	title={The Bredon-Landweber region in $C_2$-equivariant stable homotopy groups},
	status={preprint},
	eprint={arXiv:1907.01539},
}

\bib{HO16}{article}{
   author={Heller, J.},
   author={Ormsby, K.},
   title={Galois equivariance and stable motivic homotopy theory},
   journal={Trans. Amer. Math. Soc.},
   volume={368},
   date={2016},
   number={11},
   pages={8047--8077},
   issn={0002-9947},
   review={\MR{3546793}},
   doi={10.1090/tran6647},
}

\bib{Isaksen14c}{article}{
	author={Isaksen, Daniel C.},
	title={Stable stems},
	journal={Mem. Amer. Math. Soc.},
	status={to appear},
}

\bib{IWX20}{article}{
	author={Isaksen, Daniel C.},
	author={Wang, Guozhen},
	author={Xu, Zhouli},
	title={More stable stems},
	status={preprint},
	date={2020},
}

\bib{Miller75}{book}{
   author={Miller, Haynes Robert},
   title={Some algebraic aspects of the Adams-Novikov spectral sequence},
   note={Thesis (Ph.D.)--Princeton University},
   publisher={ProQuest LLC, Ann Arbor, MI},
   date={1975},
   pages={103},
   review={\MR{2625232}},
}

\bib{Novikov67}{article}{
   author={Novikov, S. P.},
   title={Methods of algebraic topology from the point of view of cobordism theory},
   language={Russian},
   journal={Izv. Akad. Nauk SSSR Ser. Mat.},
   volume={31},
   date={1967},
   pages={855--951},
   issn={0373-2436},
   review={\MR{0221509}},
}

\bib{MV99}{article}{
   author={Morel, Fabien},
   author={Voevodsky, Vladimir},
   title={${\bf A}\sp 1$-homotopy theory of schemes},
   journal={Inst. Hautes \'Etudes Sci. Publ. Math.},
   number={90},
   date={1999},
   pages={45--143 (2001)},
   issn={0073-8301},
   review={\MR{1813224 (2002f:14029)}},
}

\end{biblist}
\end{bibdiv}

\end{document}